\documentclass[11pt]{article}

\usepackage[T1]{fontenc}
\usepackage[utf8]{inputenc}
\usepackage{lmodern}
\usepackage{amsmath,amssymb,amsthm,mathtools}
\usepackage{microtype}
\usepackage[margin=1.05in]{geometry}
\usepackage{enumitem}
\usepackage{needspace}
\usepackage{tikz}
\usepackage{xcolor}
\usepackage[colorlinks=true,linkcolor=blue!50!black,citecolor=blue!50!black,urlcolor=blue!50!black]{hyperref}
\hypersetup{
  pdftitle={Functional inequalities along Wasserstein geodesics},
  pdfauthor={Nathael Gozlan, Hugo Malamut, Irène Waldspurger},
  pdfkeywords={optimal transport, Wasserstein geodesics, transport-entropy inequalities, Poincaré inequalities, Cheeger inequality, strong log-concavity, displacement convexity}
}

\newtheorem{theorem}{Theorem}[section]
\newtheorem{proposition}[theorem]{Proposition}
\newtheorem{lemma}[theorem]{Lemma}
\newtheorem{corollary}[theorem]{Corollary}
\theoremstyle{remark}
\newtheorem{remark}[theorem]{Remark}

\newcommand{\R}{\mathbb R}
\newcommand{\Pp}{\mathcal P}
\newcommand{\id}{\mathrm{Id}}
\newcommand{\Ent}{\mathrm H}
\newcommand{\dd}{\,\mathrm d}

\title{Functional inequalities along Wasserstein geodesics}
\author{%
Nathael Gozlan\thanks{Université Paris Cité, CNRS, MAP5, F-75006 Paris, France.
Email: \href{mailto:nathael.gozlan@u-pariscite.fr}{\nolinkurl{nathael.gozlan@u-pariscite.fr}}.},
Hugo Malamut\thanks{Université Paris Cité, CNRS, MAP5, F-75006 Paris, France.
Email: \href{mailto:hugo.malamut@u-pariscite.fr}{\nolinkurl{hugo.malamut@u-pariscite.fr}}.},
Irène Waldspurger\thanks{DMA, École normale supérieure, Université PSL, CNRS,
45 rue d'Ulm, 75230 Paris Cedex 05, France.
Email: \href{mailto:Irene.Waldspurger@ens.fr}{\nolinkurl{Irene.Waldspurger@ens.fr}}.}}
\date{}

\begin{document}
\maketitle

\begin{abstract}
We study functional inequalities along Wasserstein geodesics. If $\mu_0$ and $\mu_1$ are respectively $\kappa_0$- and $\kappa_1$-strongly log-concave probability measures on $\mathbb R^n$, we prove that their quadratic Wasserstein geodesic satisfies
\[
 W_2(\mu_t,\nu)\leq\sqrt{2}\left(\frac{1-t}{\sqrt{\kappa_0}}+
 \frac{t}{\sqrt{\kappa_1}}\right)\sqrt{\mathrm H(\nu\mid\mu_t)},
\]
for all probability measures $\nu$ on $\R^n.$
The coefficient is sharp. By linearization, this recovers the Poincar\'e estimate of Han and Zhu \cite{HanZhu2026}. On the real line, we prove convexity of the square roots of the optimal $T_1$ and $T_2$ constants along monotone interpolation between arbitrary probability measures. The argument applies to more general transport entropy inequalities. We also establish convexity of the rescaled $L^p$ Poincar\'e constants for every finite $p\geq1$, including the square root of the Poincar\'e constant and the inverse Cheeger constant. 
Finally, we construct a planar Wasserstein geodesic whose endpoints satisfy \(T_2\) and all finite-\(p\) \(L^p\)-Poincaré inequalities, whereas every interior interpolant fails these inequalities.
\end{abstract}

\begingroup
\small
\noindent\textbf{Keywords:} Optimal transport; Wasserstein geodesics; transport-entropy inequalities; Poincaré inequalities; strong log-concavity; displacement convexity.\par
\smallskip
\noindent\textbf{2020 Mathematics Subject Classification:} Primary 60E15; Secondary 49Q22, 26D10.\par
\endgroup

\section{Introduction}

The aim of this paper is to study how the optimal constants of functional inequalities behave along Wasserstein geodesics. We prove two kinds of results. On the real line, Poincar\'e and Talagrand constants are shown to be convex along monotone interpolation. In arbitrary dimension, strong log-concavity of the endpoints gives a sharp Talagrand inequality along the geodesic, with a bound determined by the endpoint curvature lower bounds. In dimension two and higher, we also give a counterexample to the convexity of the Talagrand and Poincar\'e constants along Wasserstein geodesics. We begin with a presentation of these results and their motivations. The main objects (spaces of probability measures, Wasserstein distances, relative entropy, Poincar\'e constant) are recalled in the Notation section below.

The main motivation comes from the work of Aishwarya and Rotem \cite{AishwaryaRotem2026}, who use convexity properties of relative entropy and of Poincaré constant to prove dimensional Brunn--Minkowski inequalities for the Gaussian measure (\cite{EM21},\cite{KL21}). More precisely, Question~1.6 of \cite{AishwaryaRotem2026} asks whether Wasserstein interpolants between even $1$-strongly log-concave measures satisfy the Poincar\'e inequality with constant $1$ for odd functions. They prove this in dimension one and in special Gaussian cases. For the Brunn--Minkowski application however, the coupling need not be optimal: Aishwarya and Li \cite{AishwaryaLi2025} use generalized geodesics with a common Gaussian source to obtain an interpolation satisfying the full Poincar\'e inequality.

Han and Zhu \cite{HanZhu2026} recently answered Aishwarya and Rotem's question in arbitrary dimension. They even provide the following sharp estimate: suppose that $\mu_i=e^{-V_i}\dd x$, $i=0,1$, are probability measures on $\R^n$, where $V_i$ is proper and lower semicontinuous and $V_i-\kappa_i|\cdot|^2/2$ is convex for some $\kappa_i>0$, then
\begin{equation}\label{eq:Han-Zhu}
 \sqrt{C_P(\mu_t)}\leq
 \frac{1-t}{\sqrt{\kappa_0}}+\frac{t}{\sqrt{\kappa_1}},
\end{equation}
with $C_P$ the Poincar\'e constant and $(\mu_t)$ the Wasserstein geodesic between $\mu_0$ and $\mu_1$. Estimate \eqref{eq:Han-Zhu} is an upper bound in terms of the endpoint curvature lower bounds. It does not assert convexity of the optimal Poincar\'e constant along the geodesic, which is false in general, as shown in \cite{HanZhu2026}.

The present paper is a substantially revised and extended version of the last chapter of \cite{Malamut2025}, where the second author raised the broader question of interpolating the constants of several functional inequalities along Wasserstein geodesics. The main result of the paper, Theorem~\ref{thm:curvature-T2}, provides, under the same assumptions as \eqref{eq:Han-Zhu}, a sharp Talagrand inequality for the interpolated measure:
\begin{equation}\label{eq:intro-curvature}
 W_2(\mu_t,\nu)\leq\sqrt{2}\left(
 \frac{1-t}{\sqrt{\kappa_0}}+\frac{t}{\sqrt{\kappa_1}}\right)
 \sqrt{\Ent(\nu\mid\mu_t)},
\end{equation}
for every probability measure $\nu$ and $t\in (0,1)$. Equality is attained for Gaussian endpoints with covariance matrices $\kappa_i^{-1}I$. 
It is well known that the linearization of a Talagrand inequality with constant $C$ yields a Poincar\'e inequality with constant $C/2$, see e.g. \cite{GozlanLeonard2010}. Thus \eqref{eq:intro-curvature} is strictly stronger than \eqref{eq:Han-Zhu}, which it recovers by linearization.
\medskip

Beyond those considerations about log-concave endpoints, we also provide general convexity results applying to arbitrary probability measures in dimension $1$. On the real line, we obtain estimates in terms of the optimal endpoint constants themselves. For $1\leq p\leq2$, let $\mathsf T_p(\mu)$ denote the best constant in
\[
 W_p(\mu,\nu)^2\leq\mathsf T_p(\mu)\Ent(\nu\mid\mu),\qquad \forall \nu.
\]
In particular, $\mathsf T_1$ and $\mathsf T_2$ are the optimal constants in the usual $T_1$ and $T_2$ inequalities. We prove in Corollary~\ref{cor:Talagrand} that
\begin{equation}\label{eq:intro-Tp-scale}
 \sqrt{\mathsf T_p(\mu_t)}
 \leq(1-t)\sqrt{\mathsf T_p(\mu_0)}+t\sqrt{\mathsf T_p(\mu_1)},\qquad t\in(0,1),
\end{equation}
where $(\mu_t)$ is the monotone interpolation between the probability measures $\mu_0$ and $\mu_1$ on $\R$.

As discovered by Marton \cite{Marton1986,Marton1996} and Talagrand \cite{Talagrand1996}, these transport-entropy inequalities are closely connected to the concentration of measure phenomenon \cite{Ledoux}. The inequality $T_1$ was shown to be equivalent to exponential integrability of order $2$ \cite{DGW04}. The inequality $T_2$ is equivalent to dimension-free Gaussian concentration \cite{Gozlan2009}. A dual formulation of transport-entropy inequalities was developed by Bobkov and G\"otze \cite{BobkovGotze1999}, and their relation to logarithmic Sobolev type inequalities was first investigated by Otto and Villani \cite{OttoVillani2000} for the $T_2$ inequality, see also \cite{BobkovGentilLedoux2001}. On the line, a characterization for the $T_2$ inequality and more general transport-entropy inequalities associated with convex costs that are quadratic near zero is given in \cite{Gozlan2012}. We refer to \cite{GozlanLeonard2010} for a survey on these inequalities.

Our one-dimensional argument applies to a larger class of Talagrand type inequalities. Given an even convex cost $\alpha$ with $\alpha(0)=0$, write
\[
 \mathcal T_\alpha(\mu,\nu)
 =\inf_{\pi\in\Pi(\mu,\nu)}\int\alpha(x-y)\dd\pi(x,y).
\]
For a non-decreasing function $\psi:[0,\infty)\to[0,\infty)$ with $\psi(0)=0$ and $\psi(s)>0$ for $s>0$, define
\begin{equation}\label{eq:intro-Kalpha}
 \mathsf T_{\alpha,\psi}(\mu)
 =\sup_{0<\Ent(\nu\mid\mu)<\infty}
 \frac{\mathcal T_\alpha(\mu,\nu)}{\psi(\Ent(\nu\mid\mu))}.
\end{equation}
Theorem~\ref{thm:Kalpha-convex} states that $\mathsf T_{\alpha,\psi}$ is convex along monotone interpolation. For homogeneous costs, a scaling argument gives convexity of the corresponding root. Choosing $\alpha(z)=|z|^p$ and $\psi(s)=s^{p/2}$ gives back \eqref{eq:intro-Tp-scale}, see Remark \ref{rem:homogeneity}.

\medskip

We also consider the optimal constants of the $L^p$ Poincar\'e inequalities. For $\mu\in\Pp(\R^n)$ and $1\leq p<\infty$, let $\mathsf C_p(\mu)$ be the best constant in
\begin{equation}\label{eq:intro-Cp}
 \inf_{a\in\R}\int|f-a|^p\dd\mu
 \leq\mathsf C_p(\mu)\int|\nabla f|^p\dd\mu,
\end{equation}
for $f\in C^1(\R^n)$ with $\nabla f\in C_c(\R^n;\R^n)$.\footnote{\label{fn:test-class}Spatial cutoff and mollification give the same constant for $C_c^\infty(\R^n)$ test functions and extend \eqref{eq:intro-Cp} to bounded $C^1$ functions with bounded gradient, for any probability measure $\mu$.}
For any two probability measures $\mu_0$ and $\mu_1$ on $\R$, Theorem~\ref{thm:sobolev-arbitrary} gives
\begin{equation}\label{eq:intro-Cp-scale}
 \mathsf C_p(\mu_t)^{1/p}
 \leq(1-t)\mathsf C_p(\mu_0)^{1/p}
       +t\mathsf C_p(\mu_1)^{1/p},\qquad t\in(0,1).
\end{equation}
For $p=2$, this is convexity of the square root of the Poincar\'e constant $C_P$, while for $p=1$, it corresponds to the inverse Cheeger constant. This extends the one-dimensional result of Aishwarya and Rotem \cite[Theorem~4.11]{AishwaryaRotem2026} who proved \eqref{eq:intro-Cp-scale} for $p = 2$, symmetric measures and odd test functions.

Finally, we construct a planar example in which the endpoint measures have connected supports and obey Talagrand $T_2$ and $L^p$ Poincar\'e inequalities and we show that the support of the interpolating measure has two separated connected components and, thus, does not satisfy those functional inequalities. This is a new geometric counterexample in dimension two, in the spirit of Santambrogio and Wang \cite{SantambrogioWang2016}, who show that displacement interpolation need not preserve convexity of supports. That it may also disconnect the support was already familiar to practitioners: in computational anatomy, optimal transport is known not to preserve the topology of the interpolated shapes \cite{Feydy2020}.

The paper is organized as follows. Section~\ref{sec:curvature-T2} proves the sharp Talagrand inequality \eqref{eq:intro-curvature} in terms of the endpoint curvature lower bounds. Sections~\ref{sec:transportentropy} and~\ref{sec:sobolev} treat Talagrand and Poincaré constants on the line. Section~\ref{sec:counter-example} gives the planar counterexample.

\subsection*{Notation}
Throughout the paper, $\Pp(\R^n)$ denotes the set of Borel probability measures on $\R^n$ and, for $p\geq1$, $\Pp_p(\R^n)$ denotes the subset of those having a finite moment of order $p$. The set of couplings of $\mu,\nu\in\Pp(\R^n)$, that is of probability measures on $\R^n\times\R^n$ having $\mu$ and $\nu$ as marginals, is denoted by $\Pi(\mu,\nu)$, and the Wasserstein distance of order $p$ is defined on $\Pp_p(\R^n)$ by
\[
 W_p^p(\mu,\nu)=\inf_{\pi\in\Pi(\mu,\nu)}\int|x-y|^p\dd\pi(x,y),
\]
where $|\cdot|$ is the Euclidean norm. We use the same formula for arbitrary probability measures, allowing the value $+\infty$. If $T$ is a measurable map and $\mu$ a probability measure, $T_\#\mu$ denotes the image of $\mu$ under $T$. The relative entropy of $\nu$ with respect to $\mu$ is defined by
\[
 \Ent(\nu\mid\mu)=\int\log\left(\frac{\mathrm d\nu}{\mathrm d\mu}\right)\dd\nu
\]
if $\nu$ is absolutely continuous with respect to $\mu$, and $\Ent(\nu\mid\mu)=+\infty$ otherwise. All the optimal constants considered below take their values in $[0,+\infty]$.

For $p=2$, the infimum in \eqref{eq:intro-Cp} is attained at $a=\int f\dd\mu$ and equals $\int|f-\int f\dd\mu|^2\dd\mu$. Thus $\mathsf C_2(\mu)$ is $C_P(\mu)$, the Poincar\'e constant of $\mu$. For $p=1$, $\mathsf C_1(\mu)$ is the inverse of the  Cheeger isoperimetric constant $h(\mu)$, defined by
\[
h(\mu)=\inf_A\frac{\mu^+(A)}{\min\{\mu(A),1-\mu(A)\}}.
\]
The infimum runs over Borel sets $A$ with $0<\mu(A)<1$, and
\[
 \mu^+(A)=\liminf_{r\to0^+}\frac{\mu(A^r)-\mu(A)}{r},
 \qquad A^r=\{x:\operatorname{dist}(x,A)<r\}.
\]
The identity $\mathsf C_1=h^{-1}$ is the classical equivalence with the median $L^1$ Poincaré inequality, see \cite[Lemma~3.1 and Remark~3.5]{BobkovHoudre1997}. 

\section{Talagrand inequality under curvature lower bounds}
\label{sec:curvature-T2}

In this section, we prove the Talagrand estimate \eqref{eq:intro-curvature}. Throughout the section, we will say that a probability measure $\mu$ on $\R^n$ is $\kappa$-strongly log-concave, with $\kappa>0$, if it is of the form $\dd\mu=e^{-V}\dd x$ with $V:\R^n\longrightarrow(-\infty,+\infty]$, proper and lower semicontinuous, and such that $V-\frac{\kappa}{2}|\cdot|^2$ is convex.

Recall that, for $\mu\in\Pp(\R^n)$, we denote by $\mathsf T_2(\mu)$ the best constant in the inequality $W_2^2(\mu,\nu)\leq\mathsf T_2(\mu)\Ent(\nu\mid\mu)$.

\begin{theorem}[Talagrand interpolation under curvature lower bounds]
\label{thm:curvature-T2}
Assume that $\mu_i$ is $\kappa_i$-strongly log-concave  with $\kappa_i>0$, $i=0,1$. Denote by $(\mu_t)$ the Wasserstein geodesic between them. Then, for all $t\in[0,1]$ and all probability measures $\nu$ on $\R^n$,
\begin{equation}\label{eq:curvature-T2}
  W_2(\mu_t,\nu)
  \leq\sqrt{2}\left(
    \frac{1-t}{\sqrt{\kappa_0}}+
    \frac{t}{\sqrt{\kappa_1}}
  \right)\sqrt{\Ent(\nu\mid\mu_t)}.
\end{equation}
Equivalently,
\begin{equation}\label{eq:curvature-T2-constant}
  \mathsf T_2(\mu_t)
  \leq2\left(
    \frac{1-t}{\sqrt{\kappa_0}}+
    \frac{t}{\sqrt{\kappa_1}}
  \right)^2.
\end{equation}
Moreover, the constant in \eqref{eq:curvature-T2-constant} is optimal.
\end{theorem}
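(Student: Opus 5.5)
The plan is to obtain \eqref{eq:curvature-T2-constant} by pushing forward the Gaussian Talagrand inequality along a Lipschitz map, and to check optimality with Gaussian endpoints. Let $\gamma$ be the standard Gaussian measure on $\R^n$, so that $\mathsf T_2(\gamma)=2$. Set
\[
 L_t=\frac{1-t}{\sqrt{\kappa_0}}+\frac{t}{\sqrt{\kappa_1}} .
\]

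\textbf{Step 1 (lifting lemma).} If $\mu=S_\#\gamma$ with $S$ an $L$-Lipschitz map, then $\mathsf T_2(\mu)\le L^2\,\mathsf T_2(\gamma)=2L^2$. Given $\nu\ll\mu$ with density $f$, define $\tilde\nu=(f\circ S)\,\gamma$. Then $S_\#\tilde\nu=\nu$ and $\Ent(\tilde\nu\mid\gamma)=\Ent(\nu\mid\mu)$. Pushing an optimal coupling of $(\gamma,\tilde\nu)$ through $S\times S$ gives a coupling of $(\mu,\nu)$, so $W_2(\mu,\nu)\le L\,W_2(\gamma,\tilde\nu)$. Talagrand's inequality for $\gamma$ then concludes. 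If $\nu$ is not absolutely continuous with respect to $\mu$, the inequality is trivial.

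\textbf{Step 2 (main step).} It then suffices to construct an $L_t$-Lipschitz map $S_t$ with $(S_t)_\#\gamma=\mu_t$. At the endpoints this is Caffarelli's contraction theorem: the Brenier map from $\gamma$ to $\mu_i$ is $\kappa_i^{-1/2}$-Lipschitz.

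The naive candidate $(1-t)S_0+tS_1$, with $S_i$ the Brenier maps from $\gamma$, is $L_t$-Lipschitz. However, it produces the generalized geodesic with Gaussian base rather than the true geodesic, so it is not the right map. The other naive candidate $T_t\circ S_0$, with $T_t=(1-t)\id+t\nabla\varphi$ the displacement map from $\mu_0$, requires controlling $\nabla^2\varphi\circ S_0\cdot DS_0$. This control does not follow from the endpoint curvature bounds alone.

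My first attempt will be to work on the product space. Consider the optimal coupling $\pi=(\id,\nabla\varphi)_\#\mu_0$ and try to realize it as $(A_0,A_1)_\#\gamma$ for a single map $(A_0,A_1)$ whose components are $\kappa_0^{-1/2}$- and $\kappa_1^{-1/2}$-Lipschitz respectively. Then $S_t=(1-t)A_0+tA_1$ is $L_t$-Lipschitz and pushes $\gamma$ to $\mu_t$.

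To build $(A_0,A_1)$, I will use a transport that exploits the joint structure of the coupling, rather than the Brenier map of each marginal separately. The natural candidate is the Kim--Milman heat-flow map, or the Brownian transport map, applied to a measure adapted to $\pi$. Alternatively, one can use the fact that the cyclically monotone coupling relates $V_0$ and $V_1$ through $\nabla\varphi$ to get a contraction estimate of Caffarelli type for the relevant Monge--Amp\`ere equation.

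If this fails, the fallback is a dual route. Use the Bobkov--G\"otze characterization, $\int e^{Qg}\dd\mu_t\le e^{\int g\dd\mu_t}$ with $Qg(x)=\inf_y\{g(y)+|x-y|^2/(2C)\}$. Along the geodesic, relate $Qg$ at time $t$ to inf-convolutions at times $0$ and $1$ via the Hopf--Lax semigroup. Then combine the endpoint inequalities with a Pr\'ekopa--Leindler-type argument, using the scaling $\sqrt{C}$ that is linear in $t$.

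I expect Step 2 to be the main obstacle. Everything else reduces to classical facts.

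\textbf{Step 3 (optimality).} Take $\mu_i=N(0,\kappa_i^{-1}I)$. The geodesic consists of the Gaussians $\mu_t=N(0,L_t^2I)$, since the Brenier map between them is the dilation by $\sqrt{\kappa_0/\kappa_1}$. For $N(0,\sigma^2I)$ one has $\mathsf T_2=2\sigma^2$, with equality attained by translates $\nu=N(a,\sigma^2I)$: indeed $W_2^2=|a|^2$ and $\Ent=|a|^2/(2\sigma^2)$. Hence \eqref{eq:curvature-T2-constant} holds with equality. Finally, \eqref{eq:curvature-T2} and \eqref{eq:curvature-T2-constant} are equivalent by the definition of $\mathsf T_2$.
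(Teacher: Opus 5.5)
Your Steps~1 and~3 are correct (Step~3 is exactly the paper's sharpness argument). The gap is Step~2: it carries the whole theorem and is left unproved, and none of the routes you list closes it.

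\textbf{Why Step~2 is out of reach as stated.} An $L_t$-Lipschitz map $S_t$ with $(S_t)_\#\gamma=\mu_t$ would give $\mu_t$ the log-Sobolev inequality with constant $2L_t^2$. That already implies the theorem by Otto--Villani, so you are trying to prove something strictly harder. Moreover, there is no curvature lower bound for $\mu_t$ itself with which to run Caffarelli, Kim--Milman or Brownian transport.

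\textbf{The product-space version is only a reformulation.} Since $\pi$ is concentrated on the graph of $\nabla\varphi$, any $(A_0,A_1)$ with $(A_0,A_1)_\#\gamma=\pi$ satisfies $A_1=\nabla\varphi\circ A_0$ $\gamma$-a.e. You would therefore need a transport $A_0$ of $\gamma$ onto $\mu_0$ that is $\kappa_0^{-1/2}$-Lipschitz while $\nabla\varphi\circ A_0$ is $\kappa_1^{-1/2}$-Lipschitz. This is the same composite control you already flagged for $T_t\circ S_0$.

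\textbf{The dual route cannot work by combining the endpoint inequalities.} The chevrons of Section~\ref{sec:counter-example} satisfy $T_2$ at both ends while every interpolant fails it. If you push the Bobkov--G\"otze computation through, you restrict the infimum defining $Qg(T_tx)$ to points $T_t(y')$ and split $|T_tx-T_ty'|^2$ by Cauchy--Schwarz with weights dictated by the linear scaling of $\sqrt C$. What this leaves you needing, for each $\nu_0$, is a \emph{single} coupling $(X_0,Y_0)$ of $\mu_0$ and $\nu_0$ with both $\mathbb E|X_0-Y_0|^2\le 2\Ent(\nu_0\mid\mu_0)/\kappa_0$ and $\mathbb E|\nabla\varphi(X_0)-\nabla\varphi(Y_0)|^2\le 2\Ent(\nu_0\mid\mu_0)/\kappa_1$. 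Nothing in your plan produces such a coupling.

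\textbf{The missing idea.} No map uniform in $\nu$ is needed: a coupling adapted to each $\nu$ suffices, and the curvature is used at each endpoint separately.
\begin{itemize}
\item The paper lifts $\nu$ through the geodesic itself, setting $\nu_0=(T_t^{-1})_\#\nu$ and $\nu_1=(\nabla\varphi)_\#\nu_0$. By the same entropy invariance as in your Step~1, $\Ent(\nu_0\mid\mu_0)=\Ent(\nu_1\mid\mu_1)=\Ent(\nu\mid\mu_t)=:H$.
\item It takes $X_0\sim\mu_0$, $X_1=\nabla\varphi(X_0)$, $Y_1=\nabla b(X_0)$ with $\nabla b$ the Brenier map from $\mu_0$ to $\nu_1$, and $Y_0=\nabla\varphi^*(Y_1)$.
\item It applies Kolesnikov's two-map Talagrand inequality (Lemma~\ref{lem:Kolesnikov-two-map}, a consequence of $\kappa$-convexity of $\Ent(\cdot\mid\mu)$ along generalized geodesics) twice: with base $\nu_1$ and maps $\nabla b^*,\nabla\varphi^*$, then with base $\mu_0$ and maps $\nabla\varphi,\nabla b$.
\item This yields exactly the simultaneous bounds $\mathbb E|X_0-Y_0|^2\le 2H/\kappa_0$ and $\mathbb E|X_1-Y_1|^2\le 2H/\kappa_1$, even though $(X_0,Y_0)$ is not an optimal coupling.
\item Since $(1-t)Y_0+tY_1=T_t(Y_0)\sim\nu$, Minkowski's inequality gives $W_2(\mu_t,\nu)\le\sqrt{2H}\,L_t$.
\end{itemize}
The two-map inequality is what replaces the Lipschitz control of $\nabla\varphi\circ A_0$ that your approach cannot supply.
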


The proof of Theorem~\ref{thm:curvature-T2} relies on the following  strengthened form of Talagrand inequality due to Kolesnikov \cite[Section~2]{Kolesnikov2004}. It compares two gradients of convex functions defined on a common source measure. Kolesnikov also used strengthened transport inequalities of this type to derive Sobolev regularity estimates for optimal transport maps onto uniformly log-concave measures \cite[Section~4]{Kolesnikov2013}. For completeness, we recall a proof based on convexity of relative entropy along generalized geodesics.

\begin{lemma}[Two-map Talagrand inequality]
\label{lem:Kolesnikov-two-map}
Let $\eta\in\Pp_2(\R^n)$ be absolutely continuous with respect to the Lebesgue measure, and let $\nabla a$ and $\nabla b$ be gradients of convex functions such that $(\nabla a)_\#\eta=\mu$ and $(\nabla b)_\#\eta=\nu$, with $\mu,\nu\in\Pp_2(\R^n)$. If $\mu$ is $\kappa$-strongly log-concave for some $\kappa>0$, then
\begin{equation}\label{eq:Kolesnikov-two-map}
  \int|\nabla a-\nabla b|^2\dd\eta
  \leq \frac{2}{\kappa}\Ent(\nu\mid\mu).
\end{equation}
\end{lemma}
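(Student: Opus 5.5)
The plan is to use $\kappa$-convexity of the relative entropy $\Ent(\cdot\mid\mu)$ along generalized geodesics with base $\eta$, following Ambrosio--Gigli--Savaré, and to differentiate at the endpoint $\mu$. We may assume $\Ent(\nu\mid\mu)<\infty$, since otherwise there is nothing to prove. We consider the curve
\[
\rho_s=\bigl((1-s)\nabla a+s\nabla b\bigr)_\#\eta,\qquad s\in[0,1],
\]
which is a generalized geodesic from $\rho_0=\mu$ to $\rho_1=\nu$ with base $\eta$. Because the maps are gradients of convex functions pushing forward $\eta$ and $\eta$ is absolutely continuous, Brenier's theorem shows that $\nabla a$ and $\nabla b$ are the optimal maps from $\eta$. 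Write $d_\eta^2(s)=\int|(1-s)\nabla a+s\nabla b-\nabla a|^2\dd\eta=s^2 D^2$, where $D^2=\int|\nabla a-\nabla b|^2\dd\eta$.

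First step: establish the generalized-geodesic convexity
\[
\Ent(\rho_s\mid\mu)\le (1-s)\Ent(\mu\mid\mu)+s\,\Ent(\nu\mid\mu)-\frac{\kappa}{2}s(1-s)D^2 .
\]
Since $\Ent(\rho\mid\mu)=\int\rho\log\rho+\int V\dd\rho$, this splits into two pieces. The first piece is the internal energy, which is convex along generalized geodesics. The reason is that $T_s=(1-s)\nabla a+s\nabla b$ is the gradient of a convex function, so $\log\det DT_s$ is concave in $s$ (Alexandrov Hessians, with the usual Monge--Ampère change of variables). The second piece is the potential energy $\int V(T_s)\dd\eta$. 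It is $\kappa$-convex with the modulus $\frac{\kappa}{2}s(1-s)\int|\nabla a-\nabla b|^2\dd\eta$, because $V-\frac\kappa2|\cdot|^2$ is convex and the interpolation is pointwise linear in the $\eta$-variable. Alternatively, I could cite this directly as AGS Theorem 9.4.11 / Lemma 9.2.7.

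Second step: use $\Ent(\mu\mid\mu)=0$ and $\Ent(\rho_s\mid\mu)\ge0$. Dividing by $s$ gives
\[
0\le \Ent(\nu\mid\mu)-\frac{\kappa}{2}(1-s)D^2 .
\]
Letting $s\to0$ yields $D^2\le\frac{2}{\kappa}\Ent(\nu\mid\mu)$, which is \eqref{eq:Kolesnikov-two-map}. Nonnegativity of entropy thus replaces any derivative computation, so no slope argument is needed.

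The main obstacle is the first step in this low-regularity setting. We need the change-of-variables formula for $T_s$, which requires injectivity $\eta$-a.e. of $T_s$ for $s<1$; this follows from strict monotonicity of $(1-s)\nabla a$ plus monotone $s\nabla b$ on the support, at least after restricting to where $\nabla a$ is differentiable. We also need to handle $V=+\infty$ off a convex set, and the finiteness of the entropy of $\rho_s$. I would treat these exactly as in AGS, invoking their result on convexity along generalized geodesics for $\lambda$-convex potential plus displacement-convex internal energy, rather than redoing the Monge--Ampère details.
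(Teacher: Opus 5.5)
Your proposal is correct and follows essentially the same route as the paper. Both arguments use the generalized geodesic $\bigl((1-s)\nabla a+s\nabla b\bigr)_\#\eta$, split $\Ent(\cdot\mid\mu)$ into internal energy (convex, via Ambrosio--Gigli--Savar\'e, Proposition~9.3.9) plus the $\kappa$-convex potential energy $\int V$ (Proposition~9.3.2), and conclude from $0\leq\Ent(\rho_s\mid\mu)\leq s\,\Ent(\nu\mid\mu)-\frac{\kappa}{2}s(1-s)D^2$ by dividing by $s$ and letting $s\to0$.
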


\begin{proof}
We may assume that $\Ent(\nu\mid\mu)<\infty$. For $s\in[0,1]$, define $\nu_s=((1-s)\nabla a+s\nabla b)_\#\eta$, the so-called generalized geodesic from $\nu_0=\mu$ to $\nu_1 = \nu$ with base $\eta.$
We use the following decomposition of relative entropy: 
\[
\Ent(p \mid \mu) =  \Ent(p \mid \mathrm{Leb})+\int V\dd p,\qquad \forall p \in \mathcal{P}_2(\R^n),
\]
where $\Ent(p \mid \mathrm{Leb})= \int \log \frac{\dd p}{\dd x} \dd p$ if $p$ is absolutely continuous with respect to Lebesgue and $+\infty$ otherwise and where $V:\R^n \to \R \cup \{+\infty\}$ is the $\kappa$-convex function such that $\dd \mu = e^{-V}\,\dd x$. 
According to \cite[Proposition~9.3.2]{AGS2005}, the ``potential energy'' $p \mapsto \int V\dd p$ is $\kappa$-convex and, according to \cite[Proposition~9.3.9]{AGS2005} (applied with $F(x)= x\log x$, $x>0$) the ``internal energy'' $p\mapsto \Ent(p \mid \mathrm{Leb})$ is convex along the generalized geodesic $(\nu_s)_{s\in[0,1]}$. 
Since $\nu_0=\mu$, this gives
\[
  0\leq\Ent(\nu_s\mid\mu)
  \leq s\Ent(\nu\mid\mu)
       -\frac{\kappa}{2}s(1-s)\int|\nabla a-\nabla b|^2\dd\eta,
  \qquad s\in[0,1].
\]
Dividing by $s>0$ and letting $s\to0$ gives \eqref{eq:Kolesnikov-two-map}.
\end{proof}

Choosing $\eta=\mu$, $\nabla a=\id$ and $\nabla b$ the Brenier map from $\mu$ to $\nu$ in Lemma~\ref{lem:Kolesnikov-two-map}, we recover the classical inequality
\begin{equation}\label{eq:classical-curvature-T2}
  W_2^2(\mu,\nu)\leq\frac{2}{\kappa}\Ent(\nu\mid\mu).
\end{equation}
We refer to \cite{CE02} for a simple transport proof of \eqref{eq:classical-curvature-T2} and to \cite{CMS01, LV09, Stu06a} for other applications of geodesic convexity in terms of functional inequalities.

\begin{proof}[Proof of Theorem~\ref{thm:curvature-T2}]
Let $\nabla\Phi$ be the Brenier map from $\mu_0$ to $\mu_1$, and let $ F_t:=(1-t)\id+t\nabla\Phi$, so that 
\begin{equation}\label{eq:Ft-curvature-T2}
  \mu_t=(F_t)_\#\mu_0,\qquad 0\leq t\leq1.
\end{equation}
Fix $0<t<1$ and a probability measure $\nu$ such that $H:=\Ent(\nu\mid\mu_t)<\infty$. The measures $\mu_0$ and $\mu_1$ have a second order exponential moment, and so does $\mu_t$, by convexity of $|\cdot|^2$ and H\"older's inequality. This easily implies that $\nu\in\Pp_2(\R^n)$.

The monotonicity of $\nabla\Phi$ implies that $F_t^{-1}$ is $(1-t)^{-1}$-Lipschitz on its domain. Since both endpoints are absolutely continuous, $\nabla\Phi$ also has an inverse $\nabla\Phi^*$ defined $\mu_1$ almost everywhere. Define
\[
 \nu_0=(F_t^{-1})_\#\nu,
 \qquad \nu_1=(\nabla \Phi)_\#\nu_0.
\]
Invariance of entropy under invertible maps gives
\begin{equation}\label{eq:three-entropies}
 \Ent(\nu_0\mid\mu_0)=\Ent(\nu_1\mid\mu_1)=H.
\end{equation}
In particular, $\nu_0$ and $\nu_1$ are absolutely continuous and have finite second moments.

Let $\nabla b$ be the Brenier map from $\mu_0$ to $\nu_1$.

Take $X_0\sim\mu_0$ and set $X_1:=\nabla\Phi(X_0)$, $Y_1:=\nabla b(X_0)$, and $Y_0:=\nabla\Phi^*(Y_1)$.
Then $X_i\sim\mu_i$ and $Y_i\sim\nu_i$ for $i=0,1$. The couplings used in the proof are represented by
\begin{center}
\begin{tikzpicture}[
  >=stealth,
  baseline=(current bounding box.center),
  forward/.style={->,thick},
  backward/.style={->,thick,red!65!black},
  maplabel/.style={fill=white,inner sep=2pt}
]
  \node (x0) at (0,1.35) {$\mu_0\sim X_0$};
  \node (x1) at (7.2,1.35) {$X_1\sim\mu_1$};
  \node (y0) at (0,-1.35) {$\nu_0\sim Y_0$};
  \node (y1) at (7.2,-1.35) {$Y_1\sim\nu_1$};

  \draw[forward] (x0) -- node[maplabel,above] {$\nabla\Phi$} (x1);
  \draw[forward] (x0) to[bend left=5]
    node[maplabel,pos=.54,sloped,above] {$\nabla b$} (y1);
  \draw[backward] (y1) to[bend left=5]
    node[maplabel,pos=.46,sloped,below] {$\nabla b^*$} (x0);
  \draw[backward] (y1) -- node[maplabel,below] {$\nabla\Phi^*$} (y0);
\end{tikzpicture}
\end{center}

Apply Lemma~\ref{lem:Kolesnikov-two-map} first with source $\nu_1$ and maps $\nabla b^*$ and $\nabla \Phi^*$, then with source $\mu_0$ and maps $\nabla \Phi$ and $\nabla b$. All four maps are gradients of convex functions. By \eqref{eq:three-entropies},
\[
 \mathbb E|X_0-Y_0|^2\leq\frac{2H}{\kappa_0},
 \qquad
 \mathbb E|X_1-Y_1|^2\leq\frac{2H}{\kappa_1}.
\]
Since $(1-t)X_0+tX_1$ and $(1-t)Y_0+tY_1=F_t(Y_0)$ have laws $\mu_t$ and $\nu$, respectively, Minkowski's inequality yields
\begin{align*}
 W_2(\mu_t,\nu)
 &\leq\bigl\|(1-t)(X_0-Y_0)+t(X_1-Y_1)\bigr\|_{L^2}\\
 &\leq\sqrt{2H}\left(\frac{1-t}{\sqrt{\kappa_0}}+
                         \frac{t}{\sqrt{\kappa_1}}\right).
\end{align*}
Finally, for $\mu_i=N(0,\kappa_i^{-1}I)$, the interpolant is $N(0,c_t^2I)$, where $c_t=(1-t)\kappa_0^{-1/2}+t\kappa_1^{-1/2}$, and its optimal Talagrand constant is $2c_t^2$ (equality in  Talagrand inequality is achieved for $\nu = N(m,c_t^2 I)$ for any $m\in \R^n$). This proves sharpness.
\end{proof}

\section{Convexity of transport entropy constants on the line}\label{sec:transportentropy}

We denote by $\lambda$ the Lebesgue measure on $(0,1)$. For a probability measure $\mu$ on $\R$, let $Q_\mu:(0,1)\to\R$ be its (left-continuous) quantile function, defined by
\[
 Q_\mu(u)=\inf\{x\in\R : \mu((-\infty,x])\geq u\},\qquad u\in(0,1).
\]
It is well known that $(Q_\mu)_\#\lambda=\mu$. Given two probability measures $\mu_0,\mu_1$ on $\R$, with quantile functions $Q_0,Q_1$, we call \emph{monotone interpolation} between $\mu_0$ and $\mu_1$ the family $(\mu_t)_{t\in[0,1]}$ defined by
\begin{equation}\label{eq:quantile-geodesic}
 Q_t=(1-t)Q_0+tQ_1,
 \qquad
 \mu_t=(Q_t)_\#\lambda.
\end{equation}
Recall that a monotone coupling on $\R$ is optimal for every cost of the form $\alpha(x-y)$ with $\alpha$ convex, see for instance \cite[Chapter~2]{Villani2009}. In particular, $(Q_0,Q_1)_\#\lambda$ is an optimal coupling of $\mu_0$ and $\mu_1$ for such costs and, if $\mu_0,\mu_1\in\Pp_p(\R)$ for some $p\geq1$, the monotone interpolation is a constant speed geodesic for $W_p$.

Let $\alpha:\R\to[0,\infty)$ be an even convex function with $\alpha(0)=0$, and let $\psi:[0,\infty)\to[0,\infty)$ be non-decreasing, with $\psi(0)=0$ and $\psi(s)>0$ for $s>0$. We consider the optimal constant
\begin{equation}\label{eq:Kalpha-def}
 \mathsf T_{\alpha,\psi}(\mu)
 =\sup_{0<\Ent(\nu\mid\mu)<\infty}
 \frac{\mathcal T_\alpha(\mu,\nu)}{\psi(\Ent(\nu\mid\mu))},
 \qquad \mu\in\Pp(\R).
\end{equation}
Thus $\mathsf T_{\alpha,\psi}(\mu)$ is the least constant in the inequality
$\mathcal T_\alpha(\mu,\nu)\leq C\psi(\Ent(\nu\mid\mu))$ for competitors of finite relative entropy. For instance, choosing $\alpha(z)=|z|$ and $\psi(s)=\sqrt{s}$ corresponds to the usual $T_1$ inequality.

\begin{lemma}\label{lem:entropy-lift}
Let $Q:(0,1)\to\R$ be non-decreasing and let $\mu=Q_\#\lambda$. Then
\begin{equation}\label{eq:lift-formula}
 \mathsf T_{\alpha,\psi}(\mu)
 =\sup_{0<\Ent(\rho\mid\lambda)<\infty}
 \frac{\mathcal T_\alpha(Q_\#\lambda,Q_\#\rho)}
      {\psi(\Ent(\rho\mid\lambda))},
\end{equation}
where the supremum runs over probability measures $\rho$ on $(0,1)$.
\end{lemma}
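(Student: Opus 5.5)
We will prove the two inequalities between the left-hand side of \eqref{eq:lift-formula} and the supremum on the right-hand side, which we denote by $S$.

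For the bound $S\leq\mathsf T_{\alpha,\psi}(\mu)$, fix $\rho$ on $(0,1)$ with $0<\Ent(\rho\mid\lambda)<\infty$ and set $\nu=Q_\#\rho$. The contraction property of relative entropy under push-forward (data processing) gives $\Ent(\nu\mid\mu)\leq\Ent(\rho\mid\lambda)$. If $\Ent(\nu\mid\mu)>0$, the definition \eqref{eq:Kalpha-def} and the monotonicity of $\psi$ yield
\[
\mathcal T_\alpha(\mu,\nu)\leq\mathsf T_{\alpha,\psi}(\mu)\,\psi(\Ent(\nu\mid\mu))\leq\mathsf T_{\alpha,\psi}(\mu)\,\psi(\Ent(\rho\mid\lambda)).
\]
If $\Ent(\nu\mid\mu)=0$, then $\nu=\mu$ and $\mathcal T_\alpha(\mu,\nu)=0$, so the same bound holds trivially. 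Taking the supremum over $\rho$ gives $S\leq\mathsf T_{\alpha,\psi}(\mu)$.

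For the reverse bound, fix $\nu$ with $0<\Ent(\nu\mid\mu)<\infty$ and write $f=\dd\nu/\dd\mu$. We set $\rho=(f\circ Q)\,\lambda$. This is a probability measure on $(0,1)$ because $\int f\circ Q\dd\lambda=\int f\dd\mu=1$. For any bounded measurable $g$ we have $\int g\circ Q\,f\circ Q\dd\lambda=\int gf\dd\mu$, hence $Q_\#\rho=\nu$. Moreover,
\[
\Ent(\rho\mid\lambda)=\int (f\log f)\circ Q\dd\lambda=\int f\log f\dd\mu=\Ent(\nu\mid\mu),
\]
so $0<\Ent(\rho\mid\lambda)<\infty$ and the ratio in \eqref{eq:lift-formula} for this $\rho$ equals the ratio in \eqref{eq:Kalpha-def} for $\nu$. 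Taking the supremum over $\nu$ gives $\mathsf T_{\alpha,\psi}(\mu)\leq S$.

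The only delicate point is the first inequality. It relies on data processing holding for an arbitrary measurable $Q$, possibly non-injective (for instance when $\mu$ has atoms). We will justify it through the Donsker--Varadhan variational formula: test functions of the form $g\circ Q$ form a subclass of all test functions on $(0,1)$, so the supremum defining $\Ent(\nu\mid\mu)$ is at most the one defining $\Ent(\rho\mid\lambda)$. Alternatively, one can argue via conditional expectations and Jensen's inequality. Either way, monotonicity of $Q$ is not actually needed.
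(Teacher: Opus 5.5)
Your proof is correct and follows the paper's argument. One inequality comes from data processing together with the monotonicity of $\psi$, and the other from the lift $\rho=(f\circ Q)\lambda$, which satisfies $Q_\#\rho=\nu$ and $\Ent(\rho\mid\lambda)=\Ent(\nu\mid\mu)$. Your separate treatment of the case $\Ent(Q_\#\rho\mid\mu)=0$ and your Donsker--Varadhan justification of data processing for non-injective $Q$ add details that the paper leaves implicit.
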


\begin{proof}
For every probability measure $\rho$ on $(0,1)$, the data processing inequality gives
\[
 \Ent(Q_\#\rho\mid\mu)\leq\Ent(\rho\mid\lambda).
\]
Since $\psi$ is non-decreasing, every quotient on the right-hand side of \eqref{eq:lift-formula} is at most $\mathsf T_{\alpha,\psi}(\mu)$. 
Conversely, let $\nu=f\mu$ satisfy $0<\Ent(\nu\mid\mu)<\infty$, and define $\rho$ by
\[
 \frac{\mathrm d\rho}{\mathrm d\lambda}(u)=f(Q(u)).
\]
Then $Q_\#\rho=\nu$ and
\[
 \Ent(\rho\mid\lambda)
 =\int_0^1 f(Q(u))\log f(Q(u))\dd u
 =\int f\log f\dd\mu
 =\Ent(\nu\mid\mu).
\]
Therefore, for any probability measure $\nu$ on $\R$ with $0<\Ent(\nu \mid \mu)<+\infty$, there exists $\rho \in \Pp((0,1))$ such that $Q_\# \rho = \nu$ and $\Ent(\nu \mid \mu) = \Ent(\rho \mid \lambda)$, which concludes the proof.
\end{proof}

\begin{theorem}[Convexity for Talagrand type inequalities]\label{thm:Kalpha-convex}
Let $\mu_0,\mu_1$ be arbitrary probability measures on $\R$, and let $(\mu_t)$ be their monotone interpolation. Then $t\mapsto\mathsf T_{\alpha,\psi}(\mu_t)$ is convex. In particular,
\begin{equation}\label{eq:Kalpha-convex}
 \mathsf T_{\alpha,\psi}(\mu_t)
 \leq(1-t)\mathsf T_{\alpha,\psi}(\mu_0)
       +t\mathsf T_{\alpha,\psi}(\mu_1),
 \qquad t\in(0,1).
\end{equation}
\end{theorem}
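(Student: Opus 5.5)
The plan is to use Lemma~\ref{lem:entropy-lift} to move every competitor onto the fixed reference space $((0,1),\lambda)$. There the dependence on $t$ enters only through the quantile function $Q_t=(1-t)Q_0+tQ_1$, which is affine in $t$.

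By Lemma~\ref{lem:entropy-lift} applied with $Q=Q_t$, which is non-decreasing, we have
\[
 \mathsf T_{\alpha,\psi}(\mu_t)=\sup_{0<\Ent(\rho\mid\lambda)<\infty}\frac{\mathcal T_\alpha((Q_t)_\#\lambda,(Q_t)_\#\rho)}{\psi(\Ent(\rho\mid\lambda))}.
\]
The denominator does not depend on $t$. A supremum of convex functions of $t$ is convex. It therefore suffices to show that, for each fixed $\rho$ with $0<\Ent(\rho\mid\lambda)<\infty$, the map
\[
 g_\rho(t):=\mathcal T_\alpha((Q_t)_\#\lambda,(Q_t)_\#\rho)
\]
is convex on $[0,1]$. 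This gives convexity of $t\mapsto \mathsf T_{\alpha,\psi}(\mu_t)$ on $[0,1]$. Inequality \eqref{eq:Kalpha-convex} follows by applying Lemma~\ref{lem:entropy-lift} also at $t=0,1$.

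To prove convexity of $g_\rho$, I use the one-dimensional structure. Let $F_\rho(u)=\rho((0,u])$ be the distribution function of $\rho$ on $(0,1)$ and $G_\rho$ its quantile function, so $\rho=(G_\rho)_\#\lambda$ with $G_\rho$ non-decreasing. Then $(Q_t)_\#\rho=(Q_t\circ G_\rho)_\#\lambda$, and $Q_t\circ G_\rho$ is non-decreasing. Likewise $(Q_t)_\#\lambda$ is the image of $\lambda$ by the non-decreasing map $Q_t$.

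The key fact is that the monotone coupling is optimal for convex costs $\alpha(x-y)$. The pair $(Q_t,Q_t\circ G_\rho)$ is a comonotone pair of non-decreasing functions on $(0,1)$, so it induces the monotone coupling of the two pushforwards. Hence
\[
 g_\rho(t)=\int_0^1\alpha\bigl(Q_t(u)-Q_t(G_\rho(u))\bigr)\dd u .
\]
One point to check is that the pair is indeed the monotone (quantile) coupling, even when $Q_t$ has flat parts or jumps. A pair of non-decreasing functions of the same uniform variable is always comonotone, and comonotone couplings are optimal for such costs. The cited result from \cite{Villani2009} covers this, although infinite values should be allowed.

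Now $Q_t(u)-Q_t(G_\rho(u))=(1-t)\bigl[Q_0(u)-Q_0(G_\rho(u))\bigr]+t\bigl[Q_1(u)-Q_1(G_\rho(u))\bigr]$ is affine in $t$ for each $u$. Since $\alpha$ is convex, the integrand is convex in $t$, and so $g_\rho$ is convex, with values in $[0,\infty]$. Convexity of $[0,\infty]$-valued functions is stable under integration by Fatou's lemma or monotone convergence, since the integrand is nonnegative, and under suprema.

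The main obstacle I expect is the rigorous identification of the optimal coupling in the degenerate cases: atoms of $\mu_t$, non-injective $Q_t$, and possibly infinite transport cost. I would handle it by noting that only the inequality $g_\rho(t)\leq\int\alpha(\cdots)$ is needed at interior $t$, and this holds for any coupling. The equality is needed only at the endpoints $t=0,1$, and I would invoke the general optimality of comonotone couplings for convex costs there.
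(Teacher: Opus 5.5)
Your proposal is correct and follows the paper's proof essentially verbatim: you lift to $((0,1),\lambda)$ via Lemma~\ref{lem:entropy-lift}, you identify $g_\rho(t)$ through the comonotone coupling $(Q_t,Q_t\circ G_\rho)_\#\lambda$, and you get convexity because the argument of $\alpha$ is affine in $t$, with the supremum preserving convexity. Your fallback that equality is needed only at $t=0,1$ is unnecessary: optimality of the comonotone coupling holds equally at every $t$, and this is how the paper states \eqref{eq:cost-quantile}; using it only at the endpoints $0$ and $1$ would by itself give just the chord inequality \eqref{eq:Kalpha-convex}, not full convexity on $[0,1]$.
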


\begin{proof}
Write $Q_t=(1-t)Q_0+tQ_1$ for the quantile function of $\mu_t$. Fix a probability measure $\rho$ on $(0,1)$ with $0<\Ent(\rho\mid\lambda)<\infty$, and let $R$ be its quantile function. Since $Q_t$ is non-decreasing, $(Q_t,Q_t\circ R)_\#\lambda$ is a monotone coupling of $\mu_t$ and $(Q_t)_\#\rho$. It is optimal for the convex cost $\alpha$, so
\begin{equation}\label{eq:cost-quantile}
 \mathcal T_\alpha(\mu_t,(Q_t)_\#\rho)
 =\int_0^1\alpha\bigl(Q_t(u)-Q_t(R(u))\bigr)\dd u.
\end{equation}
The argument of $\alpha$ is affine in $t$. Hence the right-hand side of \eqref{eq:cost-quantile} is convex in $t$. Dividing by $\psi(\Ent(\rho\mid\lambda))$, which is positive and independent of $t$, and taking the supremum over $\rho$ proves the result by Lemma~\ref{lem:entropy-lift}, since a supremum of convex functions is convex.
\end{proof}

Recall that, for $\mu\in\Pp(\R)$ and $1\leq p\leq2$, we denote by $\mathsf T_p(\mu)$ the best constant in the inequality $W_p^2(\mu,\nu)\leq\mathsf T_p(\mu)\Ent(\nu\mid\mu)$, for all $\nu \in \mathcal{P}(\R)$.
\begin{corollary}\label{cor:Talagrand}
Along the monotone interpolation $(\mu_t)$ between any two probability measures $\mu_0,\mu_1$ on $\R$, the function $t\mapsto\sqrt{\mathsf T_p(\mu_t)}$ is convex. In particular,
\begin{equation}\label{eq:Tp-scale}
 \sqrt{\mathsf T_p(\mu_t)}
 \leq(1-t)\sqrt{\mathsf T_p(\mu_0)}
       +t\sqrt{\mathsf T_p(\mu_1)},\qquad t\in(0,1).
\end{equation}
\end{corollary}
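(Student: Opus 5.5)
We will not apply Theorem~\ref{thm:Kalpha-convex} directly with $\alpha(z)=|z|^p$, $\psi(s)=s^{p/2}$. That choice only shows that $\mathsf T_p^{p/2}$ is convex along $(\mu_t)$. Since $p/2\leq1$, convexity of $\mathsf T_p^{p/2}$ does not imply convexity of $\sqrt{\mathsf T_p}$. Instead we rerun the lifting argument of Lemma~\ref{lem:entropy-lift} at the level of $W_p$ itself, exploiting that $W_p$ is a norm of an affine function of $t$.

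First, for every $t\in[0,1]$, Lemma~\ref{lem:entropy-lift} (with $\alpha(z)=|z|^p$, $\psi(s)=s^{p/2}$, and $Q=Q_t$) gives
\[
 \mathsf T_p(\mu_t)^{p/2}=\sup_{0<\Ent(\rho\mid\lambda)<\infty}\frac{W_p^p(\mu_t,(Q_t)_\#\rho)}{\Ent(\rho\mid\lambda)^{p/2}}.
\]
Here we use that the quotient defining $\mathsf T_p$ is $W_p^2/\Ent$, so its supremum raised to $p/2$ equals the supremum of $W_p^p/\Ent^{p/2}$. Competitors with $\Ent(\nu\mid\mu)=0$ satisfy $\nu=\mu$ and are harmless. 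Taking $p$-th roots gives
\[
 \sqrt{\mathsf T_p(\mu_t)}=\sup_{\rho}\frac{W_p(\mu_t,(Q_t)_\#\rho)}{\sqrt{\Ent(\rho\mid\lambda)}}.
\]

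Second, fix $\rho$ with quantile function $R$. As in the proof of Theorem~\ref{thm:Kalpha-convex}, the monotone coupling $(Q_t,Q_t\circ R)_\#\lambda$ is optimal for the cost $|x-y|^p$. Hence
\[
 W_p(\mu_t,(Q_t)_\#\rho)=\bigl\|Q_t-Q_t\circ R\bigr\|_{L^p(\lambda)}.
\]
The function $u\mapsto Q_t(u)-Q_t(R(u))$ equals $(1-t)g_0+tg_1$, where $g_i=Q_i-Q_i\circ R$. Since $\|\cdot\|_{L^p}$ is a norm for $p\geq1$ (Minkowski), the map $t\mapsto\|(1-t)g_0+tg_1\|_{L^p}$ is convex, including when some values are $+\infty$. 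Dividing by $\sqrt{\Ent(\rho\mid\lambda)}$, which is positive and independent of $t$, preserves convexity.

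Finally, $\sqrt{\mathsf T_p(\mu_t)}$ is a supremum of convex functions of $t$, so it is convex, and \eqref{eq:Tp-scale} follows.

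The only delicate point is the first step: checking that Lemma~\ref{lem:entropy-lift} is compatible with the monotone power transformation and with infinite values. Both are routine, since $x\mapsto x^{1/p}$ and $x\mapsto x^{p/2}$ are increasing bijections of $[0,\infty]$ and therefore commute with suprema.
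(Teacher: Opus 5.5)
Your proposal is correct and follows the paper's own proof: you lift via Lemma~\ref{lem:entropy-lift} with $\alpha(z)=|z|^p$ and $\psi(s)=s^{p/2}$, take $p$-th roots, identify the numerator with $\|Q_t-Q_t\circ R\|_{L^p(0,1)}$ through the monotone coupling, and conclude by Minkowski's inequality and the fact that a supremum of convex functions is convex. Your observation that applying Theorem~\ref{thm:Kalpha-convex} directly only gives convexity of $\mathsf T_p^{p/2}$ is accurate, and the paper's Remark~\ref{rem:homogeneity} shows how a rescaling argument nevertheless recovers the square-root statement from that theorem.
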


\begin{proof}
For $\alpha(z)=|z|^p$ and $\psi(s)=s^{p/2}$, Lemma~\ref{lem:entropy-lift} and \eqref{eq:cost-quantile} give
\[
 \sqrt{\mathsf T_p(\mu_t)}
 =\sup_{0<\Ent(\rho\mid\lambda)<\infty}
 \frac{\|Q_t-Q_t\circ R\|_{L^p(0,1)}}
      {\sqrt{\Ent(\rho\mid\lambda)}},
\]
where $R$ denotes the quantile function of $\rho$. Each numerator is the norm of a function affine in $t$, and is therefore convex by Minkowski's inequality. The denominator is independent of $t$, so the supremum is convex as well.
\end{proof}

\begin{remark}\label{rem:homogeneity}
One can also deduce Corollary~\ref{cor:Talagrand} by homogeneity. Fix $t\in(0,1)$. If both endpoint constants are finite, choose $a_i>\sqrt{\mathsf T_p(\mu_i)}$ and set $c=(1-t)a_0+ta_1$ and $s=ta_1/c$. Denote $D_a(x)=ax$ the dilation of ratio $a>0$. The measures $(D_{1/a_i})_\#\mu_i$ have Talagrand constant $\mathsf T_p$ smaller than $1$ by homogeneity, and their monotone interpolant at time $s$ is $(D_{1/c})_\#\mu_t$. Theorem~\ref{thm:Kalpha-convex}, applied to this monotone interpolant with $\alpha(z)=|z|^p$ and $\psi(s)=s^{p/2}$ gives, after rescaling, $\mathsf T_p(\mu_t)\leq c^2 = ((1-t)a_0+ta_1)^2$. Letting $a_i\downarrow\sqrt{\mathsf T_p(\mu_i)}$ proves \eqref{eq:Tp-scale}, including when an endpoint constant is zero. This normalization argument is classical, see \cite[proof of Theorem~2.3]{McCann1997}.
\end{remark}

\section{Convexity of \texorpdfstring{$L^p$}{Lp} Poincar\'e constants on the line}\label{sec:sobolev}

Recall that $\mathsf C_p(\mu)$ is defined as the best constant in \eqref{eq:intro-Cp}. The main result of this section is the following.

\begin{theorem}[Convexity for Poincar\'e type inequalities]\label{thm:sobolev-arbitrary}
Let $\mu_0,\mu_1$ be arbitrary probability measures on $\R$ and let $(\mu_t)_{t\in[0,1]}$ be their monotone interpolation. For every $1\leq p<\infty$, the function $t\mapsto\mathsf C_p(\mu_t)^{1/p}$ is convex on $[0,1]$. In particular,
\begin{equation}\label{eq:Cp-arbitrary}
 \mathsf C_p(\mu_t)^{1/p}
 \leq(1-t)\mathsf C_p(\mu_0)^{1/p}
          +t\mathsf C_p(\mu_1)^{1/p},\qquad t\in(0,1).
\end{equation}
For $p=2$ and $p=1$, this reads respectively
\begin{equation}\label{eq:poincare-scale}
 \sqrt{C_P(\mu_t)}
 \leq(1-t)\sqrt{C_P(\mu_0)}+t\sqrt{C_P(\mu_1)},\qquad t\in(0,1)
\end{equation}
and
\begin{equation}\label{eq:cheeger-scale}
 \frac1{h(\mu_t)}
 \leq\frac{1-t}{h(\mu_0)}+\frac{t}{h(\mu_1)},\qquad t\in(0,1).
\end{equation}
\end{theorem}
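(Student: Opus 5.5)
The plan is to lift everything to $(0,1)$ through the quantile functions, as in Section~\ref{sec:transportentropy}. This time the parametrization is by the \emph{derivative} of the test function rather than by the test function itself. The key observation is that if $f$ is a test function for $\mu_t$ and $h:=f'\circ Q_t$, then formally
\[
 f(Q_t(u))-f(Q_t(\tfrac12))=\int_{1/2}^u h\,\mathrm dQ_t,\qquad \int|f'|^p\dd\mu_t=\int_0^1|h|^p\dd u .
\]
For a \emph{fixed} $h$, the function $g_t:=\int_{1/2}^{\cdot}h\,\mathrm dQ_t=(1-t)\int h\,\mathrm dQ_0+t\int h\,\mathrm dQ_1$ is affine in $t$, while the energy $\|h\|_{L^p(0,1)}$ does not depend on $t$.

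\textbf{Step 1 (representation formula).} I would prove that for every non-decreasing $Q$ with $\mu=Q_\#\lambda$,
\[
 \mathsf C_p(\mu)^{1/p}=\sup_{h}\ \frac{\inf_{a\in\R}\bigl\|\int_{1/2}^{\cdot}h\,\mathrm dQ-a\bigr\|_{L^p(0,1)}}{\|h\|_{L^p(0,1)}},
\]
with the supremum over, say, bounded Borel $h$ on $(0,1)$ that are not a.e.\ zero. The Stieltjes integral is taken against the full measure $\mathrm dQ$, whose atoms are the jumps of $Q$.

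The jumps need care. At a jump $u_j$ of $Q$, the gap $(Q(u_j),Q(u_j+))$ carries no $\mu$-mass. A test function can therefore have an arbitrary slope $k_j$ on the gap at zero cost in $\int|f'|^p\dd\mu$, and its increment across the gap is $k_j\,\Delta Q(u_j)$. Assigning the value $h(u_j)=k_j$ encodes exactly this: it is a single point, so it does not change $\|h\|_{L^p}$, but it contributes $k_j\Delta Q(u_j)$ to the integral. Flat parts of $Q$ (atoms of $\mu$) are automatically consistent, since $\mathrm dQ$ vanishes there.

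The inequality ``$\leq$'' is obtained by taking $h=f'\circ Q$ on continuity points and $h(u_j)$ equal to the average slope of $f$ over the gap (a slight abuse; one can refine this with a cutoff). The inequality ``$\geq$'' requires building, from a given $h$, a $C^1$ function $f$ with compactly supported derivative on $\R$ whose composition $f\circ Q$ approximates $\int h\,\mathrm dQ$ in $L^p$ and whose energy approximates $\|h\|_p^p$. For this I would approximate $h$ by finite step functions, define $f'$ on $\R$ through $h\circ F_\mu$ plus the prescribed slopes on gaps, and then mollify. The density remarks in footnote~\ref{fn:test-class} handle the test class.

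\textbf{Step 2 (uniformity along the interpolation).} For $t\in(0,1)$ the jump set of $Q_t$ is the union of the jump sets of $Q_0$ and $Q_1$, with jump sizes $\Delta Q_t=(1-t)\Delta Q_0+t\Delta Q_1$. Since the formula in Step~1 is stated with the same class of $h$ for every $Q$, and the jump contributions $h(u_j)\Delta Q_t(u_j)$ are again affine in $t$, the map $t\mapsto\int h\,\mathrm dQ_t$ is affine for every fixed $h$, endpoints included.

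\textbf{Step 3 (convexity).} For fixed $h$, the function $(t,a)\mapsto\|g_t-a\|_{L^p}$ is a norm of a jointly affine expression, hence jointly convex. Its infimum over $a$ is therefore convex in $t$, and dividing by the $t$-independent quantity $\|h\|_p$ preserves this. A supremum of convex functions is convex, which gives convexity of $t\mapsto\mathsf C_p(\mu_t)^{1/p}$ and hence \eqref{eq:Cp-arbitrary}. Equations \eqref{eq:poincare-scale} and \eqref{eq:cheeger-scale} then follow from $\mathsf C_2=C_P$ and $\mathsf C_1=h^{-1}$.

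\textbf{Main obstacle.} I expect the main difficulty to be the rigorous two-sided identity in Step~1. This includes the treatment of jumps of $Q$ (gaps in the support) as free slopes, atoms of $\mu$, unbounded supports (where $\int h\,\mathrm dQ$ may fail to be in $L^p$; restricting to bounded $h$ supported away from $0$ and $1$ should suffice), and the approximation of general $h$ by genuine $C^1$ test functions with compactly supported gradient.
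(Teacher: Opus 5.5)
Your argument is correct in outline, and it takes a genuinely different route from the paper, although the convexity mechanism is the same. In the regular case, the paper freezes the derivative in Lagrangian coordinates, $f_t'=f'\circ F_t^{-1}$ with $F_t=(1-t)\id+tT$, so that $f_t\circ F_t$ is affine in $t$ while $\|f_t'\|_{L^p(\mu_t)}$ does not move. Your $h=f'\circ Q_t$ is exactly this frozen derivative written on $(0,1)$. The difference is in how arbitrary measures are reached. The paper runs the computation only for positive continuous densities, where $T$ is a $C^1$ diffeomorphism and there are no gaps or atoms. It then concludes from two soft facts: weak lower semicontinuity of $S_p=\mathsf C_p^{1/p}$, and the smoothing bound $S_p(\mu*\eta_\varepsilon)\leq S_p(\mu)+2p\varepsilon$ for the Laplace kernel. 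You instead prove one variational formula valid for every $Q$, with gaps encoded as atoms of $\mathrm dQ$ carrying free slopes. Since $\mathrm dQ_t=(1-t)\mathrm dQ_0+t\,\mathrm dQ_1$, convexity follows directly on $[0,1]$ without approximating the measures, and the formula shows transparently why a disconnected support forces $\mathsf C_p=\infty$.

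The price is that all the work moves into Step~1, which is more delicate than your sketch indicates. The identity $(F_\mu)_\#\mathrm{Leb}=\mathrm dQ$ gives $f\circ Q-f(Q(\tfrac12))=\int_{1/2}^{\cdot}h\,\mathrm dQ$ for $f'=h\circ F_\mu$, but the energies do not match at atoms. At an atom $x_i$, the function $h\circ F_\mu$ takes the single value $h(F_\mu(x_i))$, which is invisible to $\|h\|_{L^p(\lambda)}$; it may, for instance, be the slope of a gap to the right of $x_i$. So $|f'(x_i)|^p\mu(\{x_i\})$ is uncontrolled, and mollification does not repair this. You must prescribe $f'=0$ at the atoms, using cutoffs on shrinking neighbourhoods of finitely many atoms that carry all but $\varepsilon$ of the atomic mass. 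This perturbs $f$ uniformly by an arbitrarily small amount. The remaining energy is then at most $\int|h\circ F_\mu|^p\dd\mu^{c}+\varepsilon\|h\|_\infty^p+o(1)$, where $\mu^c$ is the non-atomic part. Moreover, $\int|h\circ F_\mu|^p\dd\mu^{c}\leq\|h\|_{L^p(\lambda)}^p$, because $F_\mu(Q(u))=u$ whenever $Q(u)$ is not an atom.

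Likewise, mollification converges only Lebesgue-a.e.\ while $\mu$ may be singular. So your reduction to uniformly bounded step functions $h$, approximated in $L^1(\lambda+\mathrm dQ)$ on $[\delta,1-\delta]$, is genuinely needed and not just convenient. With these points handled, the formula holds for bounded $h$ with compact support in $(0,1)$, and your proof is complete.
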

Note that the displacement convexity of $\mathsf C_p^{1/p}$ implies the same for $ \mathsf C_p$. The homogeneity argument of Remark~\ref{rem:homogeneity} shows that this is in fact equivalent.
We first prove Theorem~\ref{thm:sobolev-arbitrary} for measures with positive continuous densities on $\R$, and then pass to arbitrary measures by convolution.

\subsection{Proof of Theorem \ref{thm:sobolev-arbitrary} in the regular case}\label{sub:regular}
\label{sec:proof_regular}
In this section, we suppose that $\mu_0,\mu_1$ have positive continuous densities on $\R$ and we show the convexity of $t \mapsto \mathsf C_p^{1/p}(\mu_t)$ in this case.

The monotone transport $T:\R\to\R$ is an increasing $C^1$ diffeomorphism. Set $F_t=(1-t)\id+tT$, so that $\mu_t=(F_t)_\#\mu_0$.
Each $F_t$ is an increasing $C^1$ diffeomorphism, and $\mu_t$ has a positive continuous density on $\R$. We have by definition
\begin{equation}\label{eq:Cp-def}
 \mathsf C_p(\mu_t)^{1/p}
 =\sup_{\substack{g\in C^1(\R),\,g'\in C_c(\R)\\ \int|g'|^p\dd\mu_t>0}}
 \frac{\inf_{a\in\R}\|g-a\|_{L^p(\mu_t)}}{\|g'\|_{L^p(\mu_t)}}.
\end{equation}
Fix $x_0\in\R$. For $f\in C^1(\R)$ with $f'\in C_c(\R)$, define $f_t$ by
\begin{equation}\label{eq:transport-derivative}
 f_t(F_t(x_0))=0,\qquad f_t'=f'\circ F_t^{-1}.
\end{equation}
Note that this construction is the same as in the proof of Aishwarya and Rotem \cite[Theorem~4.11]{AishwaryaRotem2026}. Then $f_t\in C^1(\R)$ with $f_t'\in C_c(\R)$ and
\begin{equation}\label{eq:energy-frozen}
 \|f_t'\|_{L^p(\mu_t)}=\|f'\|_{L^p(\mu_0)}.
\end{equation}
Each test function $g$ in \eqref{eq:Cp-def} is obtained for some function $f$, by taking $f'=g'\circ F_t$, which yields $f_t=g+c$ for some $c\in\R$. Thus we have
\begin{equation}\label{eq:fixed-sup}
 \mathsf C_p(\mu_t)^{1/p}
 =\sup_{\substack{f\in C^1(\R),\,f'\in C_c(\R)\\ \|f'\|_{L^p(\mu_0)}>0}}
 \frac{\inf_{a\in\R}\|f_t\circ F_t-a\|_{L^p(\mu_0)}}
      {\|f'\|_{L^p(\mu_0)}}.
\end{equation}
For each such $f$, a change of variables yields
\[
 f_t(F_t(x))=\int_{x_0}^x f'(r)F_t'(r)\dd r=(1-t)(f(x)-f(x_0))+t\int_{x_0}^x f'(r)T'(r)\dd r.
\]
The function $f_t \circ F_t - a$ depends affinely on $(t,a)$. The composition with $\|\cdot\|_ {L^p(\mu_0)}$ gives a  quantity that is jointly convex in $t$ and $a$. Therefore, as a partial infimum of a jointly convex function, the numerator in \eqref{eq:fixed-sup} is convex in $t$. The denominator and the set of competitors do not depend on $t$, so their supremum is convex.

\subsection{Proof of Theorem \ref{thm:sobolev-arbitrary} in the general case}\label{sub:general}

Fix $1\leq p<\infty$ and write
\[
S_p(\mu)=\mathsf C_p(\mu)^{1/p},\qquad
 E_p(f;\mu)=\inf_{a\in\R}\int|f-a|^p\dd\mu.
\]
We first show that $S_p$ is lower semicontinuous for weak convergence. Fix $f\in C^1(\R)$ with $f'\in C_c(\R)$. Since $|f'|^p$ is bounded and continuous, the map $\mu\mapsto\int|f'|^p\dd\mu$ is continuous for the weak topology. The same is true for the map $\mu\mapsto E_p(f;\mu)$: indeed, the infimum defining $E_p(f;\mu)$ can be restricted to $a\in[\min f,\max f]$, and, along a weakly converging sequence, the convergence of the integrals $\int|f-a|^p\dd\mu$ is uniform with respect to $a$ in this compact interval. Now, if $\mu_k\to\mu$ weakly, it is enough to pass to the limit in the inequality
\[
 E_p(f;\mu_k)\leq S_p(\mu_k)^p\int|f'|^p\dd\mu_k
\]
along a subsequence realizing $\liminf_k S_p(\mu_k)$.

For $\varepsilon>0$, let $\eta_\varepsilon(\mathrm dy)=(2\varepsilon)^{-1}e^{-|y|/\varepsilon}\dd y$. We temporarily assume the following estimate, which is proved below:
\begin{equation}\label{eq:sobolev-convolution}
 S_p(\mu*\eta_\varepsilon)\leq S_p(\mu)+2p\varepsilon.
\end{equation}
Fix $t\in(0,1)$. We may assume that $S_p(\mu_0)$ and $S_p(\mu_1)$ are finite. Set $\mu_i^\varepsilon=\mu_i*\eta_\varepsilon$ and let $(\mu_t^\varepsilon)$ be their monotone interpolation. Since the endpoint densities are positive and continuous on $\R$, Section~\ref{sec:proof_regular} gives
\[
 S_p(\mu_t^\varepsilon)
 \leq(1-t)S_p(\mu_0^\varepsilon)+tS_p(\mu_1^\varepsilon)
 \leq(1-t)S_p(\mu_0)+tS_p(\mu_1)+2p\varepsilon.
\]
As $\varepsilon\to0$, the endpoint quantiles converge almost everywhere to those of $\mu_0$ and $\mu_1$. Hence $\mu_t^\varepsilon$ converges weakly to $\mu_t$. Lower semicontinuity allows us to pass to the limit in the inequality and gives \eqref{eq:Cp-arbitrary}.

\begin{proof}[Proof of \eqref{eq:sobolev-convolution}]
Let us begin with a technical result on the measure $\eta_\varepsilon$: for every bounded function $h \in C^1(\mathbb R)$, we have
\begin{equation}\label{eq:eta_eps}
    \left\|h-\int h\dd\eta_\varepsilon\right\|_{L^p(\eta_\varepsilon)}
 \leq2p\varepsilon\|h'\|_{L^p(\eta_\varepsilon)}.
\end{equation}
Indeed, for such a function $h$, by the triangle inequality and H\"older's inequality,
\[ \left\|h-\int h\dd\eta_\varepsilon\right\|_{L^p(\eta_\varepsilon)}
 \leq \left\|h-h(0)\right\|_{L^p(\eta_\varepsilon)} + |h(0)-\int h\dd\eta_\varepsilon | \leq 2\left\|h-h(0)\right\|_{L^p(\eta_\varepsilon)}. \]
Moreover, an integration by parts on each half-line, followed by H\"older's inequality, gives
\[
 \int |h-h(0)|^p\dd\eta_\varepsilon
 \leq p\varepsilon\int |h-h(0)|^{p-1}|h'|\dd\eta_\varepsilon
 \leq p\varepsilon\|h-h(0)\|_{L^p(\eta_\varepsilon)}^{p-1}\|h'\|_{L^p(\eta_\varepsilon)}.
\]
Hence $\|h-h(0)\|_{L^p(\eta_\varepsilon)}\leq p\varepsilon\|h'\|_{L^p(\eta_\varepsilon)}$, which proves \eqref{eq:eta_eps}.

Now let $X\sim\mu$ and $Y\sim\eta_\varepsilon$ be independent random variables, so that $X+Y\sim\mu*\eta_\varepsilon$, fix $f\in C^1(\R)$ with $f'\in C_c(\R)$ and set $g(x)=\mathbb E f(x+Y)$, $x\in\R$. Applying \eqref{eq:eta_eps} to $h=f(x+\cdot\,)$ for each fixed $x$, integrating in $x$ and using Minkowski's inequality, we obtain
\begin{align*}
 \inf_{a\in\R}\|f(X+Y)-a\|_p
 &\leq\|f(X+Y)-g(X)\|_p+\inf_{a\in\R}\|g(X)-a\|_p\\
 &\leq2p\varepsilon\|f'(X+Y)\|_p+S_p(\mu)\|g'(X)\|_p\\
 &\leq\bigl(2p\varepsilon+S_p(\mu)\bigr)\|f'(X+Y)\|_p.
\end{align*}
In the second line, we used the inequality defining $S_p(\mu)$ for the function $g$, which is legitimate after a spatial cutoff since $g$ and $g'$ are bounded, as explained in footnote~\ref{fn:test-class}. In the last line, we used Jensen's inequality, since $g'(X)=\mathbb E[f'(X+Y)\mid X]$. This proves \eqref{eq:sobolev-convolution}.
\end{proof}

\section{A counterexample in dimension two}\label{sec:counter-example}

We construct two probability measures on $\R^2$ which obey $L^p$ Poincaré and Talagrand $T_2$ inequalities, while every interior interpolant fails to satisfy those inequalities. The obstruction is geometric: the interpolant's support has two connected components. 

For $t\in[0,1]$, define $\Gamma_t^+=[(-1,t),(1,1-t)]$, $\Gamma_t^-=[(-1,-t),(1,t-1)]$ and $\Gamma_t=\Gamma_t^+\cup\Gamma_t^-$. We denote by $\mu_0$ and $\mu_1$ the normalized arclength measures on the chevrons $\Gamma_0$ and $\Gamma_1$ respectively.

\begin{figure}[ht]
\centering
\begin{tikzpicture}[scale=1.2, line width=1.15pt]
  \begin{scope}[xshift=-4.7cm]
    \draw[->,gray!60] (-1.35,0)--(1.35,0);
    \draw[->,gray!60] (0,-1.2)--(0,1.2);
    \draw (1,1)--(-1,0)--(1,-1);
    \node at (0,-1.5) {$t=0$};
  \end{scope}
  \begin{scope}
    \draw[->,gray!60] (-1.35,0)--(1.35,0);
    \draw[->,gray!60] (0,-1.2)--(0,1.2);
    \draw (-1,0.5)--(1,0.5);
    \draw (-1,-0.5)--(1,-0.5);
    \node at (0,-1.5) {$t=\tfrac12$};
  \end{scope}
  \begin{scope}[xshift=4.7cm]
    \draw[->,gray!60] (-1.35,0)--(1.35,0);
    \draw[->,gray!60] (0,-1.2)--(0,1.2);
    \draw (-1,1)--(1,0)--(-1,-1);
    \node at (0,-1.5) {$t=1$};
  \end{scope}
\end{tikzpicture}
\caption{The quadratic Wasserstein geodesic between the two chevrons.  At time $t=1/2$, the support $\Gamma_{1/2}$ is the union of two horizontal segments.}
\label{fig:chevrons}
\end{figure}

\begin{proposition}[Disconnection of the support]\label{prop:chevron}
There exists a unique quadratic optimal coupling between $\mu_0$ and $\mu_1$. For every $t\in[0,1]$, the displacement interpolation $\mu_t$ is the normalized arclength measure on $\Gamma_t$.
\end{proposition}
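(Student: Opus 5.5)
The plan is to write down an explicit transport map, check that it pushes $\mu_0$ to $\mu_1$ and that its displacement interpolation is the normalized arclength on $\Gamma_t$, and then prove optimality and uniqueness with a convex potential. The step I expect to be hardest is constructing that potential.

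\emph{The candidate map.} Parametrize $\Gamma_0^\pm$ by the abscissa: $\Gamma_0^+=\{(x,\tfrac{1+x}{2}):x\in[-1,1]\}$ and $\Gamma_0^-=\{(x,-\tfrac{1+x}{2})\}$. Similarly $\Gamma_1^\pm=\{(x,\pm\tfrac{1-x}{2})\}$. The candidate is the vertical map $T(x,y)=(x,\operatorname{sign}(y)-y)$ on $\Gamma_0\setminus\{(-1,0)\}$. It sends $(x,\pm\tfrac{1+x}2)$ to $(x,\pm\tfrac{1-x}2)$, so each branch $\Gamma_0^\pm$ goes affinely onto $\Gamma_1^\pm$.

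Since every branch is a graph over $x\in[-1,1]$, the normalized arclength on a branch is (half) the image of the uniform measure on $[-1,1]$ under the affine parametrization. Hence $T_\#\mu_0=\mu_1$. Moreover $(1-t)\id+tT$ maps $(x,\pm\tfrac{1+x}{2})$ to $(x,\pm(\tfrac12+\tfrac{(1-2t)x}{2}))$, which is the affine parametrization of $\Gamma_t^\pm$. So the interpolant is the image of the uniform measure on $[-1,1]$, with weight $1/2$ on each branch, i.e. the normalized arclength on $\Gamma_t$. The two branches have equal length, so the weights $1/2$ agree with normalized arclength. This gives the description of $\mu_t$ once $\pi=(\id,T)_\#\mu_0$ is known to be the unique optimal coupling.

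\emph{Optimality.} By the Knott--Smith/Rüschendorf characterization it suffices to exhibit a convex $\varphi:\R^2\to\R\cup\{+\infty\}$ with $T(p)\in\partial\varphi(p)$ for all $p\in\Gamma_0$. Equivalently, one needs $\varphi(p)+\varphi^*(T p)=p\cdot Tp$ on $\Gamma_0$.

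I would first look for $\varphi$ quadratic on each half-plane $\{\pm y>0\}$, of the form $\varphi_\pm(x,y)=ax^2+by^2\pm dxy+ex+c|y|$. Matching $\nabla\varphi_+=(x,1-y)$ along $y=\tfrac{1+x}2$ forces the constraints $2a+\tfrac d2=1$, $\tfrac d2+e=0$, $2b+2d=-1$ and $c-d=1$. Positive semidefiniteness of the Hessian holds for $d\le-2/3$, e.g. $d=-1$, $a=3/4$, $b=1/2$, $e=1/2$, $c=0$.

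The difficulty is gluing across $\{y=0\}$: the naive even extension has a concave kink where $x>0$. So I would instead take $\varphi=\max(\tilde\varphi_+,\tilde\varphi_-)$, where $\tilde\varphi_\pm$ are convex quadratics (reflections of each other) tangent to the required data on $\Gamma_0^\pm$. I would use the remaining freedom, adding terms vanishing to second order on the lines, or a large multiple of $(y\mp\tfrac{1+x}2)^2$, to ensure $\tilde\varphi_+\ge\tilde\varphi_-$ on $\Gamma_0^+$ and the symmetric statement on $\Gamma_0^-$.

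If no such closed form works, the fallback is to verify cyclical monotonicity of the graph of $T$ directly. Its cost is $|p-Tp|^2=x^2$, so this reduces to an inequality on cycles of abscissae. Finding this potential is the main obstacle.

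\emph{Uniqueness.} Any optimal plan is supported in $\partial\varphi$ for the same dual potential. I would arrange $\varphi$ to be differentiable at $\mu_0$-a.e. point of $\Gamma_0$, which holds if each $\tilde\varphi_\pm$ strictly dominates the other on the interior of its own branch. Then $\partial\varphi(p)=\{T(p)\}$ for a.e. $p$, so every optimal plan equals $(\id,T)_\#\mu_0$.
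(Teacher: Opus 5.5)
\paragraph{There is a genuine gap at the key step.} Your formulas for $T$, for $T_\#\mu_0=\mu_1$ and for $((1-t)\id+tT)_\#\mu_0$ are correct, but that is the easy half. The content of the proposition is that $(\id,T)_\#\mu_0$ is optimal and is the only optimal plan, and your proposal leaves exactly this open.

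Moreover, the construction you suggest provably fails. If $\tilde\varphi_+$ is a convex quadratic with $\nabla\tilde\varphi_+=T$ on $\Gamma_0^+$, then at $p=(x,\frac{1+x}{2})$ you get $\tilde\varphi_+(p)-\tilde\varphi_-(p)=2y(dx+c)=(1+x)\bigl(1+d(1+x)\bigr)$. At $x=1$ this equals $2(1+2d)<0$ for every admissible $d\le-2/3$; for $d=-1$ it is negative on the whole half-branch $x>0$. So $\max(\tilde\varphi_+,\tilde\varphi_-)$ has gradient $\nabla\tilde\varphi_-\neq T$ on a set of positive $\mu_0$-measure. Adding $\lambda(y-\frac{1+x}{2})^2$ goes the wrong way: it leaves $\tilde\varphi_+$ unchanged on $\Gamma_0^+$ and raises $\tilde\varphi_-$ there by $\lambda(1+x)^2$.

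The obstruction is structural. Both $(1,1)$ and $(1,-1)$ are sent to $(1,0)$. The requirement $\tilde\varphi_+(1,1)\ge\tilde\varphi_+(1,-1)$, together with $\nabla\tilde\varphi_+(1,1)=(1,0)$ and convexity, forces $\tilde\varphi_+$ to be constant on $\{1\}\times[-1,1]$. That is impossible when $\partial_{yy}\tilde\varphi_+=-1-2d\ge1/3$. Your fallback (cyclical monotonicity on arbitrary cycles) is only announced. Your uniqueness step also rests on this missing $\varphi$, since $\mu_0$ is singular and Brenier's theorem gives no differentiability on $\Gamma_0$.

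\paragraph{How to close it.} The paper avoids potentials entirely. Take any optimal $\pi$ and let $R(x_1,x_2)=(x_1,-x_2)$. The plan $\bar\pi=\frac12(\pi+(R,R)_\#\pi)$ is optimal and $R$-invariant. Two-point monotonicity applied to $(p,q)$ and $(Rp,Rq)$ in its support gives $(p-Rp)\cdot(q-Rq)=4p_2q_2\ge0$. Since $\pi\le2\bar\pi$, every optimal plan therefore preserves the branches. On a branch the cost in the abscissae is $\frac34(U-V)^2+\frac12U^2+\frac12V^2$, so uniqueness of the monotone coupling on the line forces $U=V$. This yields existence, uniqueness and your map $T$ at once.

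If you want to keep your route, build $\varphi$ as a supremum of affine functions rather than from quadratics.
\begin{itemize}
\item Integrating $T$ along the branches gives, up to a constant, the values $\phi(x)=\frac38x^2+\frac14x$ at $(x,\pm\frac{1+x}{2})$.
\item The tangent-plane condition $\phi(x)\ge\phi(x')+T(p')\cdot(p-p')$ reduces to $\frac38(x-x')^2\ge0$ when $p,p'$ lie on the same branch.
\item Across branches it reduces to $\frac18(3x^2+3x'^2-10xx'+4x-4x'+4)\ge0$. This holds on $[-1,1]^2$, with equality only at $x=x'=\pm1$: the Hessian is indefinite and the four edge restrictions factor as nonnegative products.
\end{itemize}
Then $\varphi(q)=\sup_{p'}\bigl[\phi(x')+T(p')\cdot(q-p')\bigr]$ is convex with $T(p)\in\partial\varphi(p)$. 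Here $p'$ ranges over both closed branches, with $T$ extended by continuity on each. The only active indices are $p'=p$, plus $(1,\mp1)$ at $(1,\pm1)$, which carry the same gradient $(1,0)$. Hence $\partial\varphi(p)=\{T(p)\}$ off the vertex, which is exactly what your uniqueness argument needs.
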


\begin{proof}
Let $\pi$ be a quadratic optimal coupling. We shall show that $\pi$ preserves the upper and lower branches. Let $R$ be the vertical reflection defined by $R(x_1,x_2)=(x_1,-x_2)$.  The coupling $\bar\pi=\tfrac12(\pi+(R,R)_\#\pi)$ has the same cost as $\pi$. Since both marginals are $R$-invariant, $\bar \pi$ also has $\mu_0$ and $\mu_1$ as marginals. It is hence optimal, and $R$-invariant by construction. For every \((x,y)\in\operatorname{spt}\bar\pi\), invariance gives \((Rx,Ry)\in\operatorname{spt}\bar\pi\). By monotonicity of optimal couplings, we have
\[
(x-Rx)\cdot(y-Ry)\ge0.
\]
Since \(x-Rx=(0,2x_2)\) and \(y-Ry=(0,2y_2)\), we obtain \(x_2y_2\ge0\), so that \(\bar\pi\) is supported on pairs lying in the same closed half-plane. Since $\pi$ is absolutely continuous with respect to $\bar\pi$, the same holds for \(\pi\). Thus $\pi$ preserves the upper and lower branches: $\pi(\Gamma_0^+\times\Gamma_1^+)=\pi(\Gamma_0^-\times\Gamma_1^-)=1/2$. The restriction $2\pi_{|\Gamma_0^+\times\Gamma_1^+}$ is an optimal coupling of the uniform measures on $\Gamma_0^+$ and $\Gamma_1^+$.

Consider, in general, two probability measures, $\nu_0$ and $\nu_1$,  supported on $\Gamma_0^+$ and $\Gamma_1^+$ respectively. If $X$ has law $\nu_0$, by the geometry of $\Gamma_0^+$, we have $X = (U,\tfrac12 (1+U))$ with $U$ the first coordinate of $X$.   Similarly, $Y \sim \nu_1$ can be expressed as  $(V,\tfrac12(1-V))$ and we have
\[ \mathbb{E}|X-Y|^2 =  \frac{3}{4}\mathbb{E}|U - V|^2 + \frac{1}{2} \mathbb{E}U^2 + \frac{1}{2} \mathbb{E}V^2. \]
The moments $\mathbb{E}U^2$ and $\mathbb{E}V^2$ being independent of the coupling chosen, minimizing $\mathbb{E}|X-Y|^2$ among the couplings $(X,Y)$ of $\nu_0$ and $\nu_1$ is equivalent to minimize $\frac{3}{4}\mathbb{E}|U - V|^2$ among the couplings of $\tilde \nu_0$ and $\tilde \nu_1$, defined as the pushforward of $\nu_0$ and $\nu_1$ by the first coordinate. This one dimensional optimal transport problem is uniquely solved by the monotone coupling.

In our case, $\nu_0$ and $\nu_1$ are uniform on $\Gamma_0^+$ and $\Gamma_0^-$, so $\tilde \nu_0$ and $\tilde \nu_1$ are uniform on $[-1,1]$. Since they are equal, their optimal coupling is given by $U = V$ and so the optimal coupling $2\pi_{|\Gamma_0^+\times\Gamma_1^+}$ is uniform on the set $\{(x,y)\in \Gamma_0^+\times\Gamma_1^+ : x_1=y_1\}$. This shows, by symmetry, that $\pi$ is uniform on the set
\[
 \{(x,y)\in\Gamma_0\times\Gamma_1 : x_1=y_1 \quad \text{and}\quad x_2y_2\ge0\}.
\]
As $\mu_t = ((1-t)x+ty)_\#\pi$, we obtain that $\mu_t$ is the normalized arclength measure on $\Gamma_t$.
\end{proof}

\begin{corollary}[Loss of functional inequalities in dimension at least two]\label{cor:chevron-functional}
Let $p \in [1,\infty)$. With $\mathsf C_p$ defined by \eqref{eq:intro-Cp}, the chevron measures $\mu_0$ and $\mu_1$ satisfy
\[
 \mathsf C_p(\mu_i), \mathsf T_2(\mu_i) <\infty.
\]
For every $0<t<1$, however,
\[
 \mathsf C_p(\mu_t) = \mathsf T_2(\mu_t)=\infty.
\]
Thus, $L^p$-Poincaré inequality and Talagrand $T_2$ inequality fail to propagate by displacement interpolation in dimension $n\geq2$.
\end{corollary}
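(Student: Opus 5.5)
Two things have to be established: the endpoint chevrons are finite for $\mathsf C_p$ and $\mathsf T_2$, and every interior interpolant is infinite for both.

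\emph{Interior times.} Fix $0<t<1$. By Proposition~\ref{prop:chevron}, $\mu_t$ is the normalized arclength measure on $\Gamma_t=\Gamma_t^+\cup\Gamma_t^-$. These two segments are disjoint compact sets: their second coordinates differ by $2t-2x_1(1-t)\cdot 0$, that is, the vertical gap is $2t>0$ at $x_1=-1$ and $2-2t>0$ at $x_1=1$, and the gap is affine in $x_1$. By symmetry each segment carries mass $1/2$.

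\emph{Failure of $L^p$ Poincaré.} Choose $f\in C^\infty_c(\R^2)$ with $f\equiv1$ on a neighbourhood of $\Gamma_t^+$ and $f\equiv0$ on a neighbourhood of $\Gamma_t^-$. Then $\nabla f=0$ on $\Gamma_t$, so $\int|\nabla f|^p\dd\mu_t=0$. On the other hand $\inf_a\int|f-a|^p\dd\mu_t=\inf_a\tfrac12(|1-a|^p+|a|^p)>0$. Hence $\mathsf C_p(\mu_t)=\infty$.

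\emph{Failure of $T_2$.} Take $\nu=2\mu_t|_{\Gamma_t^+}$, so that $\Ent(\nu\mid\mu_t)=\log2$. To show $\mathsf T_2(\mu_t)=\infty$ we need a family of competitors with vanishing entropy and comparatively large transport cost. Set $\nu_\varepsilon=(1+\varepsilon)\mu_t$ on $\Gamma_t^+$ and $(1-\varepsilon)\mu_t$ on $\Gamma_t^-$. Then $\Ent(\nu_\varepsilon\mid\mu_t)=\tfrac12[(1+\varepsilon)\log(1+\varepsilon)+(1-\varepsilon)\log(1-\varepsilon)]\sim\varepsilon^2/2$. Any coupling must move mass $\varepsilon/2$ from $\Gamma_t^-$ to $\Gamma_t^+$ over a distance at least $d=\operatorname{dist}(\Gamma_t^+,\Gamma_t^-)>0$, so $W_2^2(\mu_t,\nu_\varepsilon)\geq d^2\varepsilon/2$. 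The ratio is therefore at least $d^2/\varepsilon\to\infty$, giving $\mathsf T_2(\mu_t)=\infty$. The same scheme, or the linearization of $T_2$ into Poincaré, gives consistency.

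\emph{Endpoints.} This is the main step. By reflection, consider $\mu_0$. The chevron $\Gamma_0$ is a connected polygonal curve of total length $L=2\sqrt5$, and its arclength parametrization $\gamma:[0,L]\to\Gamma_0$ is $1$-Lipschitz. Hence $\mu_0=\gamma_\#(\text{uniform on }[0,L])$.

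For $\mathsf C_p$: given a test function $f$, the composition $g=f\circ\gamma$ is Lipschitz with $|g'|\le|\nabla f|\circ\gamma$. The classical one-dimensional $L^p$ Poincaré inequality for the uniform measure on an interval (constant of order $L^p$, proved for instance by $|g(s)-g(s')|\le\int|g'|$ and Hölder) then gives $\inf_a\int|f-a|^p\dd\mu_0\le C L^p\int|\nabla f|^p\dd\mu_0$.

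For $\mathsf T_2$: the uniform measure $\lambda_L$ on $[0,L]$ satisfies $T_2$, since it is bounded-density and log-concave on a segment; one can cite the standard fact that it satisfies log-Sobolev (Bakry--Émery after a bounded perturbation, or Holley--Stroock) and then apply Otto--Villani. Given $\nu\ll\mu_0$, lift it as in Lemma~\ref{lem:entropy-lift}, setting $\rho=(f\circ\gamma)\lambda_L$, which has the same entropy. Because $\gamma$ is $1$-Lipschitz, $W_2(\mu_0,\nu)\le W_2(\lambda_L,\rho)$, so $\mathsf T_2(\mu_0)\le\mathsf T_2(\lambda_L)<\infty$. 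The only delicate point is citing $T_2$ for the uniform measure on an interval, which is standard.
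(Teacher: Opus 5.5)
Your proof is correct. For $\mathsf C_p$ it is the paper's argument: at the endpoints you use the unit-speed parametrization of the chevron and the one-dimensional inequality on $[0,L]$, and at interior times a test function locally constant on $\Gamma_t^\pm$. For $\mathsf T_2$ you take a genuinely different route.

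The paper treats $\mu_0$, $\mu_1$ and $\mu_t$ at once, using the fact that for compactly supported measures finiteness of the Poincar\'e and $T_2$ constants are equivalent. In one direction, Poincar\'e gives a transport-entropy inequality with a quadratic--linear cost by \cite[Corollary~5.1]{BobkovGentilLedoux2001}, and on a bounded set this cost dominates a multiple of $|x-y|^2$. The other direction is linearization. This is short and shows that the $T_2$ failure is the same obstruction as the Poincar\'e failure, but it relies on a nontrivial external theorem and gives no explicit constant.

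You argue directly instead. At interior times, the perturbations $\nu_\varepsilon$ make the mechanism visible. Moving mass $\varepsilon/2$ across the gap $d$ costs order $\varepsilon$ in $W_2^2$, while the entropy is only of order $\varepsilon^2$. This is the linearization mechanism made concrete for the function equal to $\pm1$ on the two components.

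At the endpoints you reduce to one dimension through the $1$-Lipschitz parametrization and the entropy lift of Lemma~\ref{lem:entropy-lift}, which gives $\mathsf T_2(\mu_i)\le\mathsf T_2(\lambda_L)$. The ``delicate point'' you flag can be removed by the same lift. The measure $\lambda_L$ is the image of $N(0,1)$ under $L$ times the standard Gaussian distribution function, a map with Lipschitz constant $L/\sqrt{2\pi}$. Talagrand's Gaussian inequality then gives $\mathsf T_2(\lambda_L)\le L^2/\pi$, hence $\mathsf T_2(\mu_i)\le 20/\pi$. This avoids the detour through Holley--Stroock and Otto--Villani.

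Three small cleanups:
\begin{itemize}
\item Your expression ``$2t-2x_1(1-t)\cdot 0$'' for the vertical gap is garbled; the gap is $2t+(1-2t)(x_1+1)$. The endpoint values $2t$, $2-2t$ and the affinity argument you give are right, so the conclusion stands.
\item The opening competitor $2\mu_t|_{\Gamma_t^+}$ is a false start and should be deleted, since a single $\nu$ only yields a finite ratio.
\item The closing claim about $n\geq2$ needs the one-line remark that the construction embeds isometrically in $\R^n$, where all your arguments apply verbatim.
\end{itemize}
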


\begin{proof}
Each chevron is the image of an interval of length $L=2\sqrt5$ under a unit-speed parametrization $\gamma$, which sends $\lambda_L$, the uniform measure on $[0,L]$,  to its normalized arclength measure. For $f\in C^1(\R^2)$ with $\nabla f\in C_c(\R^2;\R^2)$, the function $g=f\circ\gamma$ is absolutely continuous and satisfies $|g'|\leq|\nabla f|\circ\gamma$ almost everywhere. Since
\[
\inf_{a\in\mathbb R}\|f-a\|_{L^p(\mu_i)}
=
 \inf_{a\in\R}\|g-a\|_{L^p(\lambda_L)}
 \leq L\|g'\|_{L^p(\lambda_L)}
 \leq L\|\nabla f\|_{L^p(\mu_i)},
\]
we obtain $\mathsf C_p(\mu_i)\leq L^p$.

Fix $0<t<1$. By Proposition~\ref{prop:chevron}, the support of $\mu_t$ consists of two compact sets $\Gamma_t^+$ and $\Gamma_t^-$ at positive distance, each of mass $1/2$. Choose $f\in C^1(\R^2)$ with $\nabla f\in C_c(\R^2;\R^2)$ equal to $1$ near $\Gamma_t^+$ and to $-1$ near $\Gamma_t^-$. Then
\[
 \inf_{a\in\R}\int|f-a|^p\dd\mu_t=1,
 \qquad \int|\nabla f|^p\dd\mu_t=0,
\]
so $\mathsf C_p(\mu_t)=\infty$ for every finite $p\geq1$.

It remains to consider $T_2$. For compactly supported probability measures, finiteness of the Poincar\'e and $T_2$ constants is equivalent. Indeed, by \cite[Corollary~5.1]{BobkovGentilLedoux2001}, a Poincar\'e inequality implies a transport-entropy inequality for a cost that is quadratic near zero and linear at infinity. On a bounded support this cost bounds a positive multiple of $|x-y|^2$, which gives $T_2$. The converse follows by linearization. Applying this equivalence to $\mu_0$, $\mu_1$ and $\mu_t$ proves the remaining assertions. Finally, the construction embeds isometrically in $\R^n$ for every $n\geq2$.
\end{proof}

\section*{Acknowledgements}
Nathael Gozlan and Hugo Malamut acknowledge support from the Agence nationale de la recherche (ANR) through grant ANR-23-CE40-0017 (project SOCOT). This research was conducted within the FP2M federation (CNRS FR~2036).

\section*{Declaration of generative AI use}
The proof of Theorem \ref{thm:curvature-T2} was suggested by ChatGPT (OpenAI) in response to questions posed by the authors. The other mathematical results and their proofs were obtained in earlier work without the use of generative AI. ChatGPT was also used to assist with the mathematical and editorial review of the manuscript and with bibliographic checks. The authors independently verified the proof of Theorem \ref{thm:curvature-T2} and critically assessed the suggestions made during the revision process. They have reviewed and approved the final manuscript and take full responsibility for its mathematical correctness and overall content.

\end{document}